\definecolor{mycolor}{rgb}{0.122, 0.435, 0.698}
\theoremstyle{plain} 
\newtheorem{teo}{Theorem}
\newtheorem{lem}{Lemma}
\newtheorem{cor}{Corollary}
\theoremstyle{definition}
\theoremstyle{remark}
\newcommand{\mapeo}[5]{
	\begin{eqnarray*}
		#1:#2 & \longrightarrow & #3\\
		#4 & \longmapsto & #5
\end{eqnarray*}}
\renewcommand{\epsilon}{\varepsilon}
\newcommand{\subjclass}[2][2010]{%
	\let\@oldtitle\@title%
	\gdef\@title{\@oldtitle\footnotetext{#1 \emph{Mathematics Subject Classification.} #2}}%
}
\newcommand{\keywords}[1]{%
	\let\@@oldtitle\@title%
	\gdef\@title{\@@oldtitle\footnotetext{\emph{Key words and phrases.} #1.}}%
}
\title{\bf{Hausdorff reflection preserves shape}}
\author{Diego Mondéjar}
\date{}                     
\subjclass{54B15, 54C56, 54D10, 55P55}
\keywords{Shape theory, Hausdorff reflection, finite topological space, inverse limits}
\begin{document}
\maketitle
\begin{abstract}
We show that the Hausdorff reflection preserves the shape type of spaces. Some examples as well as the applicability in inverse limits of finite spaces are presented.
\end{abstract}

\section{Introduction}
Theree are some examples of studies in non-Hausdorff spaces many years ago, as the Alexandroff spaces in \cite{Adiskrete}. But It has been largely considered that non-Hausdorff spaces are not very interesting since they can hold only a very poor topological structure. It is very significant the inclusion of the $T_2$ condition in the definition of compact space in Bourbaki. In the other direction, we have Kelley's topology book \cite{Kelley}, where we can find many definitions not assuming spaces are Hausdorff. In \cite{Willif} and \cite{ReiNon} there are some methods proposed for dealing with non-Hausdorff spaces. More recently, there has been some renewed interest in these spaces because of the development of Digital Topology \cite{KonAto} and finite topological spaces \cite{MayFin,BarAlg}. This paper rows somehow in that direction. We want to compare the Hausdorff reflection of a topological space with the space itself in terms of shape type. As a motivation, we can cite \cite{Mon}, where it is shown that the Tychonoff functor indeed induces the identity morphism in shape. So, a topological space and its Tychonoff reflection have the same shape. We will show the same for the Hausdorff reflection.

We begin in Section \ref{sec:reflection} briefly reviewing the Hausdorff reflection of topological spaces and the theory of shapes, giving some insight about its utility in the handling of some pathological spaces. Then, in Section \ref{sec:result}, we prove the main theorem: the Hausdorff reflection of a topological space has the same shape as the original space. Finally, we offer some examples in Section \ref{sec:examples}, where this fact can be exploited to obtain some conclusions. It is specially relevant in the case of inverse limits of finite $T_0$ spaces, since they strengthens known results of reconstruction of topological spaces.
\section{Hausdorff reflections and Shape Theory}\label{sec:reflection}
In this section we recall the main two elements necessary to state our result.

The idea of a reflection of a topological space is to construct another space, as similar as posible to the first one, with an extra separation property and a universal map. It is similar in spirit to compactification. Concretely, given a topological space $X$ and a separation property $T$, we will say that $\mu_X:X\rightarrow X_T$ is the $T$ \emph{reflection} of $X$ if
$\mu_X$ is a continuous map, $X_T$ has the property $T$ and every continuous map $f:X\rightarrow Z$ with $Z$ having property $T$, factors through a map $g:X_T\rightarrow Z$, i.e., the diagram
$$\xymatrix @C=20mm{X \ar[d]_{\mu_X}\ar[r]^{f}& Z \\
X_T\ar[ur]_{g}}$$ commutes. If the map $\mu_X$ is surjective we will say that the reflection is surjective, too. The existence of surjective reflections for the separation properties $T_i,\enspace i=0,1,2,3,3\frac{1}{2}$ is a well known fact \cite{MNtopics}. It is easy to see that two reflections of the same space are homeomorphic.

In many cases, reflections are obtained as quotient spaces. But this is not always the case. For example, the Tychonoff functor -or reflection- can not be obtained with a quotient by a relation. Nevertheless, the relation is sometimes far from obvious. As a matter of fact, in order to obtain the Hausdorff reflection we need to define the following relations, as shown in \cite{SBMa}, a short and beautiful paper about reflections.
\begin{itemize}[itemsep=0ex]
\item $x R_1 y$ iff for every pair of neighborhoods $U_x, U_y$ of $x, y$ resp., we have $U_x\bigcap U_y\neq\emptyset$.
\item $x R_2 y$ iff there exist $x=z_1,z_2,\ldots,z_n=y$ such that $z_1R_1z_2R_1\ldots R_1z_n$.
\item $x R_3 y$ iff for every $f:X\rightarrow Z$, with $Z$ Hausdorff, we have $f(x)=f(y)$ for every pair $x R_2 y$.
\end{itemize}
Then, the Hausdorff reflection of $X$ is the quotient space $X_H=X/R_3$. Being a quotient or not, it is difficult to understand the process of reflection in the sense that we do not know which properties or structures of the space are preserved.

On the other hand, Shape Theory is an extension of Homotopy Theory introduced by Borsuk \cite{Bconcerning} in order to deal with some pathologies of topological spaces making homotopy not work correctly. Namely, it is useful when we have bad local properties as in the Warsaw Circle. Although is not intended to be used with spaces with bad separation properties, we will see that it can help sometimes. Shape theory is defined for compact metric spaces in its beginings and developed for arbitrary topological spaces later. They make use of invese systems of ANRs in order to define morphisms between the spaces. For details, see \cite{MSshape}. We have an extension of the homotopy category, enlarging the set of morphisms. Thus not every shape morphism is represented by a continuous function, but we have that every continuous function induces a shape morphism. From \cite{Mshapes}, we have the following useful characterization for a function to induce an isomorphism in the shape category. Let us use the following notation: for a pair of topological spaces $Z, R$, $[Z,R]$ is the set of homotopy classes of continuous functions from $Z$ to $R$. For a map $h:Z\rightarrow R$, we represent by $[h]$ its homotopy class.
\begin{teo}\label{teo:funindiso}
Let $X$ and $Y$ be topological spaces and $f:X\rightarrow Y$ a continuous map. Then $f$ is a shape equivalence (that is, the shape morphism induced by $f$ is an isomorphism in the shape category) if and only if, for every metric ANR $P$, the function \mapeo{f}{\left[ Y,P\right] }{\left[ X,P\right] }{\left[ h\right] }{\left[ h\cdot f\right] } is a bijection.
\end{teo}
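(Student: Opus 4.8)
The plan is to deduce the statement from the Yoneda-type representation of shape morphisms: a shape morphism $X\to Y$ carries exactly the same data as a natural transformation between the homotopy functors $[Y,-]$ and $[X,-]$ restricted to metric ANRs. First I would recall two standard facts from the ANR-expansion formulation of shape. (i) For any space $Z$ and any metric ANR $P$, the shape morphisms $Z\to P$ coincide naturally with the homotopy classes $[Z,P]$, because a metric ANR is, up to homotopy, its own expansion. (ii) Fixing $Z$, the assignment $P\mapsto [Z,P]$ is a covariant $\mathrm{Set}$-valued functor on the homotopy category of metric ANRs (a map $P\to Q$ acts by postcomposition), and a continuous map $f:X\to Y$ induces the natural transformation $f^{*}:[Y,-]\to[X,-]$, $[h]\mapsto[h\cdot f]$, which is precisely the one appearing in the statement.

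The forward implication is then formal. If $f$ is a shape equivalence, its induced shape morphism $S(f)$ is an isomorphism in the shape category; applying the contravariant functor $\mathrm{Sh}(-,P)$ and using (i) to identify $\mathrm{Sh}(Z,P)=[Z,P]$, we obtain that precomposition with $S(f)$, namely $f^{*}:[Y,P]\to[X,P]$, is a bijection for every metric ANR $P$.

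For the converse I would invoke the representation theorem: shape morphisms $X\to Y$ are in bijective correspondence with natural transformations $[Y,-]\to[X,-]$ of the functors above. Assuming $f^{*}$ is a bijection for every $P$, the natural transformation $f^{*}$ is a natural isomorphism, so its inverse $(f^{*})^{-1}:[X,-]\to[Y,-]$ is again a natural transformation and hence corresponds to a shape morphism $G:Y\to X$. Since the correspondence is functorial, $G\circ S(f)$ and $S(f)\circ G$ correspond to the identity natural transformations and are therefore the identity shape morphisms; thus $S(f)$ is invertible and $f$ is a shape equivalence.

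The main obstacle is the representation theorem used in the converse, namely that every natural transformation of these functors is induced by a shape morphism and that distinct shape morphisms induce distinct transformations. This is the genuinely shape-theoretic input, and it is where the inverse-system machinery enters: choosing an ANR-expansion $\mathbf{p}:Y\to(Y_{\lambda},q_{\lambda\lambda'},\Lambda)$, one uses the identification $[Y,P]=\varinjlim_{\lambda}[Y_{\lambda},P]$, valid precisely because $P$ is an ANR, to translate a natural family $\{[Y,P]\to[X,P]\}_{P}$ into a compatible pro-morphism of the underlying ANR-systems, that is, a shape morphism, via a levelwise Yoneda argument. Checking naturality and the coherence of the chosen homotopies across the directed set $\Lambda$ is the technical heart; once it is established, the bijection and hence the theorem follow.
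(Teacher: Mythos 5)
The paper does not prove this theorem at all: it is quoted as a known characterization from the reference \texttt{[Mshapes]} (Morita's description of shape), so there is no in-paper argument to compare yours against line by line. Your sketch is, in substance, the standard proof of that quoted result, and it is sound. The forward direction is indeed purely formal once one knows that $\mathrm{Sh}(Z,P)\cong[Z,P]$ for $P$ a metric ANR, and that identification is essentially the definition of an ANR-expansion (the rudimentary system $\mathrm{id}:P\to P$ is an expansion of $P$). For the converse you correctly locate all of the real content in the representation theorem identifying shape morphisms $X\to Y$ with natural transformations $[Y,-]\to[X,-]$ on the homotopy category of metric ANRs, compatibly with composition; granting that, the Yoneda-style inversion of $f^{*}$ is immediate. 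Two caveats are worth making explicit. First, that representation theorem is not a small technical lemma to be ``checked'': it is essentially the whole theorem in disguise, and it is precisely what the cited reference establishes, so your proof is really a reduction to the literature rather than an independent argument --- which is legitimate, and matches what the paper itself does by citing the result. Second, since the paper applies the theorem to badly separated (non-Hausdorff) spaces, your argument silently uses the fact that \emph{every} topological space admits an ANR- (or polyhedral) expansion, e.g.\ the \v{C}ech system of nerves of normal open coverings; you should state that existence result explicitly, as without it the identification $[Y,P]=\varinjlim_{\lambda}[Y_{\lambda},P]$ has nothing to apply to.
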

\section{Result}\label{sec:result}
 We begin by the following technical lemma, needed in the proof.
\begin{lem}
The Hausdorff reflection of the product $X\times I$, where $I=[0,1]$, is homeomorphic to $X_H\times I$.
\end{lem}
\begin{proof}
Consider the continuous map $$\begin{array}{rcl}f:X\times I&
\longrightarrow &X_H\times I\nonumber\\(x,t)&\longmapsto &
(\mu_X(x),t),\nonumber\end{array}$$ which is a quotient map. Moreover, the space $X_H\times I$ is Hausdorff, so there exists a continuous surjective map $h:(X\times I)_H\rightarrow X_H\times I$
such that the diagram
$$\xymatrix @C=20mm{X\times I\ar[d]_{\mu_{X\times I}}\ar[r]^{f}& X_H\times I \\
(X\times I)_H\ar[ur]_{h}}$$ commutes. We see that $h$ is actually a homeomorphism. First of all, $h$ is a quotient map, because $f$ and $\mu_{X\times
I}$ are (\cite{Egeneral}, pag 91). Also, it is an injective map. Indeed, let
$[a],[b]\in(X\times I)_H$ such that $h([a])=h([b])=([z],t)$. Considering that
$\mu_{X\times I}$ is surjective, there exist $(x,t_1),(x,t_2)$ such that
$\mu_{X\times I}(x,t_1)=[a]$ and $\mu_{X\times I}(y,t_2)=[b]$. Because of the commutativity of the previous diagram we have that
\begin{eqnarray}
([x],t_1)=f(x,t_1)=h(\mu_{X\times
I}(x,t_1))=h([a])=([z],t)\nonumber\\
([y],t_2)=f(y,t_2)=h(\mu_{X\times I}(y,t_2))=h([b])=([z],t)\nonumber
\end{eqnarray}
so $[x]=[y]=[z]$ and $t_1=t_2=t$. For this concrete $t$,
we consider the commutative diagram $$\xymatrix @C=20mm{X\ar[r]^{id\times t}\ar[d]_{\mu_X}& X\times I\ar[d]^{\mu_{X\times I}} \\
X_H\ar[r]_g & (X\times I)_H},$$ which exists for being $\mu_{X\times
I}\circ(id\times t):X\rightarrow(X\times I)_H$ a continuous map to a Hausdorff space. We consider the images of $x,y$ by the two different maps of the diagram.
As $[x]=[y]$, we obtain that
$[a]=[b]$, so $h$ is injective. A quotient and injective map is a homeomorphism$\qedhere$
\end{proof} 
\begin{teo}\label{teo:shapereflection}
A topological space $X$ has the same shape than its Hausdorff reflection $X_H$.
\end{teo}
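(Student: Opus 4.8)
The plan is to show that the natural reflection map $\mu_X : X \to X_H$ is a shape equivalence by invoking Theorem~\ref{teo:funindiso}. That is, for every metric ANR $P$, I must verify that the induced map

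$$\mu_X^\ast : [X_H, P] \longrightarrow [X, P], \qquad [h] \longmapsto [h \circ \mu_X]$$

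is a bijection. Since every metric ANR is in particular Hausdorff, the universal property of the Hausdorff reflection will be the engine driving both injectivity and surjectivity.

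**First I would** establish surjectivity of $\mu_X^\ast$. Given any continuous $f : X \to P$, because $P$ is Hausdorff the universal property of $\mu_X$ yields a continuous $g : X_H \to P$ with $g \circ \mu_X = f$; hence $[f] = \mu_X^\ast([g])$ lies in the image. This part is essentially immediate from the defining factorization property of the reflection. **The harder direction will be** injectivity at the level of \emph{homotopy classes}: I must show that if $h_0, h_1 : X_H \to P$ satisfy $h_0 \circ \mu_X \simeq h_1 \circ \mu_X$ in $[X,P]$, then already $h_0 \simeq h_1$ in $[X_H, P]$. Here a homotopy $F : X \times I \to P$ between $h_0 \circ \mu_X$ and $h_1 \circ \mu_X$ lives on $X \times I$, but I need to descend it to a homotopy on $X_H \times I$.

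**This is precisely where the technical Lemma is used.** The Lemma identifies $(X \times I)_H$ with $X_H \times I$, so the reflection map $\mu_{X \times I}$ may be regarded as the map $\mu_X \times \mathrm{id}_I : X \times I \to X_H \times I$. Since $P$ is Hausdorff, the homotopy $F : X \times I \to P$ factors through its Hausdorff reflection, giving a continuous $\widehat{F} : X_H \times I \to P$ with $\widehat{F} \circ (\mu_X \times \mathrm{id}_I) = F$. I would then check that $\widehat{F}$ restricts on $X_H \times \{0\}$ and $X_H \times \{1\}$ to $h_0$ and $h_1$ respectively: on the endpoint slices, $\widehat{F}(\mu_X(x), i) = F(x,i) = h_i(\mu_X(x))$, and since $\mu_X$ is surjective this forces $\widehat{F}(\cdot, i) = h_i$. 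Thus $\widehat{F}$ is the desired homotopy $h_0 \simeq h_1$ in $[X_H, P]$, proving injectivity.

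**The main obstacle** I anticipate is ensuring that the factorizations through $P$ respect the homotopy structure cleanly — in particular, confirming that the endpoint conditions survive the passage through the reflection, which relies essentially on surjectivity of $\mu_X$ (so that agreement after precomposition with $\mu_X$ forces genuine agreement). With surjectivity and injectivity of $\mu_X^\ast$ both established for arbitrary metric ANR $P$, Theorem~\ref{teo:funindiso} concludes that $\mu_X$ is a shape equivalence, whence $X$ and $X_H$ have the same shape type.
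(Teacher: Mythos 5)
Your proposal is correct and follows essentially the same route as the paper: both establish surjectivity of $\mu_X^\ast$ directly from the universal property of the reflection, and both obtain injectivity by factoring the homotopy $X\times I\to P$ through $(X\times I)_H\cong X_H\times I$ via the technical lemma. Your explicit remark that surjectivity of $\mu_X$ is what forces the endpoint slices of the descended homotopy to equal $h_0$ and $h_1$ is a point the paper leaves implicit, but the argument is the same.
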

\begin{proof}
We show that for every topological space $X$, the Hausdorff reflection $\mu_X:X\rightarrow X_H$ induces the identity morphism in shape. In order to show this, we are going to use the characterization of identity morphisms in shape, Theorem \ref{teo:funindiso}. So, $\mu_X:X\rightarrow X_H$ is the identity morphism in shape if and only if the map 
$$\begin{array}{rcl}[X_H,P]&
\longrightarrow &[X,P]\\
h&\longmapsto & h\cdot f,\\ \end{array}$$ with $P$ being any metric ANR, is bijective.

It is surjective. Given a map $g:X\rightarrow P$,
with $P$ ANR and then, Hausdorff, there exists a map
$h:X_H\rightarrow P$ such that $g=h\cdot\mu_X$, that is, what we wanted.
It is injective: Let $h_1,h_2:X_H\rightarrow P$, with $P$ ANR, two continuous maps such that $h_1\cdot\mu_X$ y $h_2\cdot\mu_X$
are homotopic, i.e., there exists a continuous map, $G:X\times
I\rightarrow P$ such that $G(x,0)=h_1\cdot\mu_X(x)$ and
$G(x,1)=h_2\cdot\mu_X(x)$. Being $P$ Hausdorff, there exists a continuous map $F:(X\times I)_H\rightarrow P$ such that
$G=F\cdot\mu_{X\times I}$. Applying the previous lemma, we get
$\mu_{X\times I}=\mu_X\times id$, so we have that the following diagram commutes:
$$\xymatrix @C=20mm{X\times I\ar[r]^G\ar[d]_{\mu_X\times id}& P\\
X_H\times I\ar[ur]_F & }.$$ Then, for every $x\in X$, we have
\begin{eqnarray}
F([x],0)=G(x,0)=h_1\cdot\mu_X(x)=h_1([x])\nonumber\\
F([x],1)=G(x,1)=h_2\cdot\mu_X(x)=h_2([x]).\nonumber
\end{eqnarray}
So, $h_1$ and $h_2$ are homotopic$\qedhere$
\end{proof} 

\section{Examples}\label{sec:examples}
We analyze some non-Hausdorff spaces and its Hausdorff reflection in order to show that there can be some non trivial properties on those spaces that can be detected using this result. Specially, we focus in its use for inverse limits of $T_0$ finite spaces.

\paragraph{The punctured circle} We define a topological space in a similar way to the sometimes called "line with two origins", a classical non-Hausdorff space. Consider, with the complex numbers notation, the following two circles lying on the plane \[C_1=\lbrace e^{i\theta}:\theta\in[0,2\pi)\rbrace,\enspace C_2=\lbrace 2e^{i\theta}:\theta\in[0,2\pi)\rbrace.\] Let us consider the relation $e^{i\theta} R 2e^{i\theta}$ for every $\theta\in(0,2\pi)$. That is, we identify every pair of concentric points of the circles but one. Now, we consider the punturred circle, which is the quotient space $\mathcal{C}=(C_1\cup C_2)/R$. It is clear that $\mathcal{C}$ is not $T_2$ since the points $1$ and $2$ are not separable by open neighborhoods. It is clear that the Hausdorff reflection of the puntured circle $\mathcal{C}_H$ is obtained by identifying these two points and it is just a circle. By Theorem \ref{teo:shapereflection}, the shape types of these two spaces are the same and hence we see that shape is able to detect the main structure of $\mathcal{C}$ ignoring the conflict points of non-Hausdorffness.

\paragraph{Finite spaces} Topological spaces with a finite set of points are a good example of non-Hausdorff spaces with some interesting structures. It is shown in \cite{MccSin} that there are $T_0$ finite spaces with the same homotopy and homology groups as any polyhedron. Actually, a finite topological space satisfying the $T_1$ separation property is discrete. Hence, the Hausdorff reflection of every finite topological space, being Hausdorff, is discrete. We recover the well known fact that shape type of fnite topological spaces is the same as a finite set of points with the discrete topology. Here, shape is not able to capture any relevant structure. It is worth mentioning here the category constructed \cite{Chshape} to classify, at the same time, compact Hausdorff spaces by their shape type and finite topological spaces by their weak homotopy class.

\paragraph{Invese systems of finite spaces} This is the most adequate framework for the relation shown in Theorem \ref{teo:shapereflection}. The inverse limit of finite $T_0$ spaces may not be Hausdorff but, in some cases, it exhibits a rich topological structure that can be captured by the shape type. We show two examples here. In a series of papers, \cite{KWfinite, KTWthe} the authors showed that every compact Hausdorff space is the Hausdorff reflection of the inverse limit of an inverse system of finite spaces. The finite spaces involved in this proof are defined with a boolean algebra on the open sets of all possible coverings. The idea is to reconstruct some topological properties of the original space with the inverse limit. Here, the Hausdorff reflection lacks a clear interpretation and poses an obstacle to understanding the recovery process, as pointed out by the authors. By applying Theorem \ref{teo:shapereflection}, we can obtain the following refinement, replacing the Hausdorff reflection with the shape recovery.
\begin{cor}
Every compact Hausdorff space has the same shape as the inverse limit of an inverse system of finite spaces.
\end{cor}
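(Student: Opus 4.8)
The plan is to combine the main theorem of this paper with the reconstruction result of Kiosak--Witkowski and collaborators in a single step. By the cited work \cite{KWfinite, KTWthe}, any compact Hausdorff space $Y$ arises as the Hausdorff reflection $X_H$ of the inverse limit $X = \varprojlim X_n$ of some inverse system of finite spaces. That is, there is an inverse system $\{X_n, p_{nm}\}$ of finite topological spaces whose limit $X$ satisfies $X_H \cong Y$. This is the only external input required, and I would invoke it verbatim, taking care to record that the $X_n$ are finite and $T_0$ as in the hypotheses of the corollary.

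With this system in hand, the argument is short. First I would recall that by Theorem \ref{teo:shapereflection}, the reflection map $\mu_X : X \rightarrow X_H$ is a shape equivalence, so $X$ and $X_H$ have the same shape type. Since $X_H \cong Y$ and homeomorphic spaces trivially have the same shape, we conclude that $Y$ has the same shape as $X = \varprojlim X_n$. Reading this backwards gives exactly the statement: the given compact Hausdorff space $Y$ has the same shape as the inverse limit of an inverse system of finite spaces.

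The substance of the corollary is therefore entirely carried by Theorem \ref{teo:shapereflection}; the reconstruction theorem is a black box and the composition of shape equivalences is formal. The only point that needs genuine care is making sure the hypotheses match: the reconstruction theorem must produce finite spaces of the kind the corollary advertises, and the inverse limit it produces must be precisely the space whose Hausdorff reflection is $Y$, not merely a space admitting a map to $Y$. I expect the main obstacle to be bookkeeping of this kind rather than any real mathematical difficulty, since the heavy lifting has already been done in the preceding section.

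I would also want to emphasize, as the surrounding discussion already does, why this is a \emph{refinement} rather than a mere restatement: the reconstruction result recovers $Y$ only after passing to the Hausdorff reflection, an operation the original authors found opaque and obstructive, whereas the corollary replaces that opaque operation with the transparent and invariant notion of shape equivalence. Thus the inverse limit $X$ itself already carries the shape type of $Y$, and one need not understand the reflection process to see that the topological structure of $Y$ is recovered up to shape.
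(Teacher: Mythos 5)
Your proposal is correct and follows exactly the argument the paper intends: quote the result of \cite{KWfinite, KTWthe} that every compact Hausdorff space is the Hausdorff reflection of the inverse limit of an inverse system of finite spaces, then apply Theorem \ref{teo:shapereflection} to replace the reflection by a shape equivalence. This is precisely the paper's (implicit) one-line proof, so there is nothing to add.
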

As a second example, we have that in \cite{MMreconstruction}, the conditions of the original space are relaxed and it is shown a closer relation, namely that every compact metric space $X$ has the same homotopy type as the inverse limit $\mathcal{X}$ of an inverse sequence of finite spaces. There, it is also shown that the Hausdorff reflection of the inverse limit $\mathcal{X}_H$ is homeomorphic to the original space $X$. Hence, in this case, the Hausdorff reflection is preserving not only shape byt the homotopy type of the original space. We do not know when this preservation holds in general.

\paragraph{Acknowledgments} The author is grateful to his thesis advisor, M.A. Morón, for his help regarding these results.

\paragraph{Funding} This work has been partially supported by the research project PGC2018-098321-B-I00(MICINN). The author has been also supported by the FPI Grant BES-2010-033740 of the project MTM2009-07030 (MICINN).

\addcontentsline{toc}{chapter}{References}
\bibliographystyle{siam}
\bibliography{HausdorffReflection}

\begin{thebibliography}{10}

\bibitem{Adiskrete}
{\sc P.~Alexandroff}, {\em Diskrete raüme}, Math. Sb., 2 (1937), pp.~501--518.

\bibitem{BarAlg}
{\sc J.~A. Barmak}, {\em Algebraic topology of finite topological spaces and
  applications}, vol.~2032, Springer, 2011.

\bibitem{Bconcerning}
{\sc K.~Borsuk}, {\em Concerning homotopy properties of compacta}, Fund. Math.,
  62 (1968), pp.~223--254.

\bibitem{Chshape}
{\sc P.~J. Chocano, M.~A. Mor{\'o}n, and F.~R.~R. del Portal}, {\em Shape of
  compacta as extension of weak homotopy of finite spaces}.
\newblock to appear in Homology, Homotopy, Appl., 2024.

\bibitem{Egeneral}
{\sc R.~Engelking}, {\em General topology}, vol.~6 of Sigma Series in Pure
  Mathematics, Heldermann Verlag, Berlin, second~ed., 1989.
\newblock Translated from the Polish by the author.

\bibitem{Kelley}
{\sc J.~L. Kelley}, {\em General topology, 1955}, vol.~27, 1955.

\bibitem{KonAto}
{\sc T.~Y. Kong, R.~Kopperman, and P.~R. Meyer}, {\em A topological approach to
  digital topology}, The American mathematical monthly, 98 (1991),
  pp.~901--917.

\bibitem{KTWthe}
{\sc R.~D. Kopperman, V.~V. Tkachuk, and R.~G. Wilson}, {\em The approximation
  of compacta by finite {$T\sb 0$}-spaces}, Quaest. Math., 26 (2003),
  pp.~355--370.

\bibitem{KWfinite}
{\sc R.~D. Kopperman and R.~G. Wilson}, {\em Finite approximation of compact
  {H}ausdorff spaces}, in Proceedings of the 12th {S}ummer {C}onference on
  {G}eneral {T}opology and its {A}pplications ({N}orth {B}ay, {ON}, 1997),
  vol.~22, 1997, pp.~175--200.

\bibitem{Mshapes}
{\sc S.~Marde{\v{s}}i{\'c}}, {\em Shapes for topological spaces}, General
  Topology and Appl., 3 (1973), pp.~265--282.

\bibitem{MSshape}
{\sc S.~Marde{\v{s}}i{\'c} and J.~Segal}, {\em Shape theory. The inverse system
  approach}, vol.~26 of North-Holland Mathematical Library, North-Holland
  Publishing Co., Amsterdam-New York, 1982.

\bibitem{MayFin}
{\sc J.~May}, {\em Finite topological spaces}, Notes for REU,  (2003).

\bibitem{MccSin}
{\sc M.~C. McCord et~al.}, {\em Singular homology groups and homotopy groups of
  finite topological spaces}, Duke Math. J., 33 (1966), pp.~465--474.

\bibitem{MMreconstruction}
{\sc D.~Mond{\'e}jar~Ruiz and M.~A. Mor{\'o}n}, {\em Reconstruction of compacta
  by finite approximations and inverse persistence}, Rev. Mat. Complut.,
  (2020).

\bibitem{Mon}
{\sc K.~Morita}, {\em On shapes of topological spaces}, Fund. Math., 86 (1975),
  pp.~251--259.

\bibitem{MNtopics}
{\sc K.~Morita and J.-i. Nagata}, eds., {\em Topics in general topology},
  vol.~41 of North-Holland Mathematical Library, North-Holland Publishing Co.,
  Amsterdam, 1989.

\bibitem{ReiNon}
{\sc I.~L. Reilly}, {\em On non-hausdorff spaces}, Topology Appl., 44 (1992),
  pp.~331--340.

\bibitem{SBMa}
{\sc R.~Sharpe, M.~Beattie, and J.~Marsden}, {\em A universal factorization
  theorem in topology}, Canad. Math. Bull., 9 (1966), pp.~201--207.

\bibitem{Willif}
{\sc A.~Wilansky}, {\em On non-hausdorff spaces}, Am. Math. Mon., 77 (1970),
  pp.~157--161.

\end{thebibliography}

\vspace{1cm}
\noindent\textsc{Diego Mondéjar}\\
Departamento de Matemáticas\\
CUNEF Universidad\\
\texttt{diego.mondejar@cunef.edu}

\end{document}